\newcommand{\repeattheorem}[1]{%
	\begingroup
	\renewcommand{\thetheorem}{\ref{#1}}%
	\expandafter\expandafter\expandafter\theorem
	\csname reptheorem@#1\endcsname
	\endtheorem
	\endgroup
}
\xdef\csname reptheorem@#1\endcsname{%
		\unexpanded\expandafter{\BODY}%
	}%
\unskip\label{#1}\endtheorem
\newcommand{\NN}{\mathbb{N}}
\newcommand{\PrA}{\mathop{\mathbf{PrA}}\nolimits}
\newcommand{\BA}{\mathop{\mathbf{BA}}\nolimits}
\newcommand{\Th}{\mathop{\mathbf{Th}}\nolimits}
\newcommand{\Digit}{\mathop{Digit}\nolimits}
\renewcommand{\epsilon}{\varepsilon}
\renewcommand{\phi}{\varphi}
\newcommand{\sref}[2]{\hyperref[#2]{#1 \ref*{#2}}}
\newcommand{\dref}[2]{\hyperref[#2]{ #1 }}
\newcommand{\Ac}{\mathcal{A}}
\newcommand{\Bc}{\mathcal{B}}
\newcommand{\Kc}{\mathcal{K}}
\newcommand{\Lc}{\mathcal{L}}
\newcommand{\equivdef}{\xLeftrightarrow{\mathit{def}}}
\newcommand{\ra}{\rightarrow}
\newcommand{\Lra}{\Leftrightarrow}
\spnewtheorem{hyp}{Conjecture}[section]{\bfseries}{\itshape}
\spnewtheorem{ex}{Example}{\bfseries}{\itshape}
\spnewtheorem{stm}{Statement}[section]{\bfseries}{\itshape}
\newcommand{\dotminus}{\mathbin{\text{\@dotminus}}}
\newcommand{\@dotminus}{%
	\ooalign{\hidewidth\raise1ex\hbox{.}\hidewidth\cr$\m@th-$\cr}%
}
\begin{document}
	\author{Alexander Zapryagaev\thanks{The publication was prepared within the framework of the Academic Fund Program at HSE University (grant 21-04-027).}}
	\title{On Interpretations in B\"uchi Arithmetics}
	\institute{National Research University Higher School of Economics, 6, Usacheva Str., Moscow, 119333, Russian Federation}
	
	\maketitle
	
	\begin{abstract}
		B\"uchi arithmetics $\BA_n$, $n\ge 2$, are extensions of Presburger arithmetic with an unary functional symbol $V_n(x)$ denoting the largest power of $n$ that divides $x$. Definability of a set in $\BA_n$ is equivalent to its recognizability by a finite automaton receiving numbers in their $n$-ary expansion. We show that B\"uchi arithmetics $\BA_n$ and $\BA_m$ are bi-interpretable for any $n,m$. Furthermore, we establish that any interpretation of some structure in $\BA_n$ is isomorphic to some one-dimensional interpretation; however, this isomorphism must not be $\BA_n$-definable.
	\end{abstract}
	
	\section{Preliminaries}
	
	A B\"uchi arithmetic $\BA_n$, $n\ge 2$, is the theory $\Th(\NN,=,+,V_n)$ where $V_n$ is an unary functional symbol such that $V_n(x)$ is the largest power of $n$ that divides $x$ ($V_n(0)$ is defined to be $0$). They form a family of decidable extensions of Presburger arithmetic $\PrA=\Th(\NN,=,+)$ \cite{presburger}.
	
	These theories were proposed by J.~B\"uchi \cite{bc} in 1960 in order to describe the recognizability of sets of natural numbers by finite automata through logic. This relation is made explicit in the following classic result by V\'eronique Bruy\`ere \cite{bruyere,bv}:
	
	\begin{theorem}\label{there}
		Let $\varphi(x_1,\ldots,x_m)$ be a $\BA_n$-formula. Then there is an effectively constructed automaton $\Ac$ such that $(a_1,\ldots,a_m)$ is accepted by $\Ac$ iff $\NN\models\varphi(a_1,\ldots,a_m)$.
		
		Contrariwise, let $\Ac$ be a finite automaton working on $m$-tuples of $n$-ary natural numbers. Then there is an effectively constructed $\BA_n$-formula $\varphi(x_1,\ldots,x_m)$ such that $\NN\models\varphi(a_1,\ldots,a_m)$ iff $(a_1,\ldots,a_m)$ is accepted by $\Ac$. Furthermore, this formula is of complexity class not surpassing $\Sigma_2$ \cite{haase}.
	\end{theorem}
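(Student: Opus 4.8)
The plan is to prove the two directions separately: the formula-to-automaton direction by structural induction on $\varphi$, and the automaton-to-formula direction by encoding an accepting run inside $\BA_n$ and then controlling the quantifier alternation. Throughout I fix a reading convention: given $(a_1,\ldots,a_m)$, write each $a_i$ in base $n$, pad all entries with leading zeros to a common length, and feed the resulting sequence of digit-columns from $\{0,\ldots,n-1\}^m$ to the automaton.

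For the forward direction I would first flatten terms, introducing auxiliary existential variables so that every atomic subformula has the form $x_i=x_j$, $x_i+x_j=x_k$, or $V_n(x_i)=x_j$; the extra existentials are discharged by the quantifier case below. The base cases are then explicit: equality is a one-state comparison, addition is recognized by a carry automaton, and $V_n(x_i)=x_j$ by an automaton verifying that $x_j$ is a power of $n$ (exactly one nonzero digit, equal to $1$) sitting precisely at the position of the lowest nonzero digit of $x_i$, with the degenerate case $x_i=0\Rightarrow x_j=0$ handled separately. For the induction, $\wedge$ and $\vee$ give product and union automata, $\neg$ gives complementation after determinization, and $\exists x_i$ gives projection of the $i$-th track followed by re-determinization.

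For the backward direction, let $\Ac$ have states $q_0,\ldots,q_{s-1}$ with transition function $\delta$. The idea is to describe an accepting run by its characteristic vectors: for each state $q$ introduce a number $X_q$ whose base-$n$ digit at a given position is $1$ exactly when the run occupies $q$ before reading that column. Using $V_n$ and $+$ one writes $\BA_n$-formulas expressing that (i) at each position exactly one $X_q$ carries a $1$, (ii) the initial position is assigned the start state, (iii) for every position $i$, whenever $X_q$ has a $1$ at $i$ and the input column there is $d$, the digit of $X_{\delta(q,d)}$ at the successor position is $1$, and (iv) the terminal position is accepting. The whole acceptance predicate is then $\exists X_{q_0}\cdots\exists X_{q_{s-1}}$ applied to the conjunction of (i)--(iv); since only the transition clause (iii) is genuinely universal over positions while the rest are local, the prenex form has shape $\exists^*\forall^*$, i.e.\ lies in $\Sigma_2$.

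I expect two points to be the real work. In the forward direction it is the quantifier step: projection must be reconciled with the leading-zero padding convention, so the cleanest route is to keep every recognized language saturated under the addition and removal of leading-zero columns, and to re-establish this invariant after each projection and complementation. In the backward direction the delicate point -- and the reason the $\Sigma_2$ bound is nontrivial -- is writing the digit-extraction predicate (``the base-$n$ digit of $x$ at position $i$ is $d$'') and the passage from a position to its successor with low enough complexity that the auxiliary existentials can be absorbed and the matrix under the $\forall$ block kept quantifier-free; it is precisely this encoding that must be arranged carefully to avoid an extra alternation (cf.\ \cite{haase}).
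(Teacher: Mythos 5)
The paper offers no proof of this theorem: it is stated as background, with the equivalence credited to Bruy\`ere \cite{bruyere,bv} and the $\Sigma_2$ bound to \cite{haase}. Your sketch is therefore not competing with anything in the text; measured against the literature it is the standard argument, and the main equivalence is correctly outlined in both directions. In particular you are right that the real bookkeeping in the formula-to-automaton direction is the leading-zero saturation invariant that makes projection and complementation compatible with the padding convention, and the run encoding by characteristic numbers $X_q$ in the converse direction is the classical construction.

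The only substantive issue is the $\Sigma_2$ bound, which your sketch asserts but does not establish. As you set it up, the universal quantifier over positions has a matrix built from digit-extraction predicates, and the natural definition of $\Digit_n(x,p)=d$ (a low part $y<p$ plus a high part divisible by $np$) is existential; prenexing then gives $\exists^*\forall^*\exists^*$, not $\exists^*\forall^*$. Your proposed remedy --- keeping the matrix under the $\forall$ block quantifier-free --- is not attainable, but it is also more than you need: since for fixed $x$ and $p$ exactly one $d\in\{0,\ldots,n-1\}$ satisfies the existential definition, $\Digit_n(x,p)=d$ is equivalent to the conjunction of the negations of the other $n-1$ cases and hence is also $\Pi_1$-definable. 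Writing every digit literal (positive or negated) in its $\Pi_1$ form makes the matrix under the position quantifier $\Pi_1$, the universal blocks merge, and the whole formula becomes $\exists^*\forall^*$ as claimed. With that one observation supplied (and the minor bookkeeping that your clause (i) is also universal over positions, and that the run length must be pinned down by an existentially quantified power of $n$ dominating all inputs so that the $X_q$ are finite), your sketch is a correct proof outline of the cited result.
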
.
	
	We consider the multi-dimensional non-paramentic interpretations \cite[pp.~20--21]{tarskimostowski} in $\BA_n$. An \emph{interpretation} $\iota$ of some first-order language $\Kc$ in a structure $\mathfrak{B}$ of language $\Lc$ is the translation of $\Kc$-formulas into $\Lc$-formulas defined the following $\Lc$-formulas:
	
	\begin{enumerate}
		\item $D_{\iota}(y)$ defining the set $\mathbf{D}_{\iota}\subseteq\mathfrak{B}$ (the \emph{domain formula});			
		\item $P_{\iota}(x_1,\ldots,x_n)$ for predicate symbols $P(x_1,\ldots,x_n)$ of $\Kc$;			
		\item $f_\iota(x_1,\ldots,x_n,y)$ for functional symbols $f(x_1,\ldots,x_n)$ of $\Kc$.
	\end{enumerate}
	
	Here, all $f_\iota$'s are required to define graphs of some functions (modulo interpretation of equality). This definition naturally extends to the translation of any $\Kc$-formulas. Naturally, $\iota$ and $\mathfrak{B}$ give a model $\mathfrak{A}$ of the language $\Kc$ with the domain $\mathbf{D}_{\iota}/{\sim_{\iota}}$, where equivalence relation $\sim_{\iota}$ is given by $=_{\iota}(x_1,x_2)$. $\mathfrak{A}$ is called the \emph{internal model} of $\Kc$.
	
	If $\mathfrak{A}\models\mathbf{T}$, then $\iota$ is an \emph{interpretation of the theory} $\mathbf{T}$ in $\mathfrak{B}$. If for a first-order theory $\mathbf{U}$ an interpretation $\iota$ is an interpretation of $\mathbf{T}$ for any $\mathfrak{B}\models\mathbf{U}$, then $\iota$ is called an interpretation of theory $\mathbf{T}$ in $\mathbf{U}$.
	
	Two theories are called \emph{bi-interpretable} if there is an interpretation of both into the other one. We study the interpretations in the B\"uchi arithmetic $\BA_n$. We note that, as $\BA_n$ is the true theory of $(\NN;=,+,V_n)$, it suffices to consider the interpretations in $(\NN;=,+,V_n)$.

		\section{One-Dimensional Interpretations}
		
		Unlike Presburger arithmetic, in the B\"uchi case it makes no difference whether to consider one-dimensional or multi-dimensional interpretations; that is, whether the elements of the internal model are natural numbers or $k$-tuples of them. In fact, we are able encode $k$-tuples into single natural numbers as long as $k$ is known and fixed beforehand.
		
		\begin{theorem}\label{one-dimen}
				Let the structure $A$ be $m$-dimensionally interpreted in B\"uchi arithmetic $\BA_n$, $m\ge 2$, under interpretation $\iota$. Then there is a one-dimensional interpretation $\iota'$ such that the internal models of $A$ given by $\iota$ and $\iota'$ are isomorphic.
			\end{theorem}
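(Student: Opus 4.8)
The plan is to encode $m$-tuples of natural numbers into single natural numbers in a way that is definable in $\BA_n$, and then transport the interpretation $\iota$ along this encoding. The natural device is to interleave the $n$-ary digits of the $m$ coordinates. Concretely, given a tuple $(a_1,\ldots,a_m)$, I would map it to the single number $a$ whose $n$-ary expansion, read in blocks of $m$ consecutive digits, has the $j$-th digit of each block equal to the corresponding digit of $a_j$. Since the defining feature of $\BA_n$ is exactly the recognizability of digit-level properties by finite automata (Theorem~\ref{there}), this interleaving operation and its inverse projections are finite-automaton computable and hence $\BA_n$-definable.

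First I would exhibit, for each fixed $m$, a $\BA_n$-formula $\mathrm{Pack}(x_1,\ldots,x_m,y)$ asserting that $y$ is the interleaving of $x_1,\ldots,x_m$, together with projection formulas $\pi_j(y,x_j)$ recovering each coordinate. The cleanest route is automaton-theoretic: an automaton reading the synchronized $n$-ary expansions of $y$ and the $x_i$ simply checks, cyclically through positions modulo $m$, that the digit of $y$ at position $km+j$ matches the digit of $x_{j+1}$ at position $k$; by Theorem~\ref{there} this yields the desired formulas. I would also verify that $\mathrm{Pack}$ defines a genuine bijection between $\NN^m$ and $\NN$ (every $y$ decodes, and distinct tuples give distinct codes), so that the encoding is total and injective.

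Next I would define $\iota'$ by pulling the data of $\iota$ through $\mathrm{Pack}$. The new domain formula is $D_{\iota'}(y) \ardef \exists x_1\cdots\exists x_m\,\bigl(\mathrm{Pack}(x_1,\ldots,x_m,y)\wedge D_{\iota}(x_1,\ldots,x_m)\bigr)$, and each predicate and function formula of $\iota'$ is obtained from the corresponding formula of $\iota$ by replacing every $m$-tuple variable with its packed code and decoding inside via $\mathrm{Pack}$. One then checks that the map sending the $\sim_{\iota}$-class of a tuple $\av$ to the $\sim_{\iota'}$-class of its code $\mathrm{Pack}(\av)$ is a well-defined bijection on domains that commutes with the interpreted predicates and functions, i.e.\ an isomorphism of the internal models. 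This verification is routine once $\mathrm{Pack}$ is shown to be $\BA_n$-definable.

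The main obstacle I anticipate is purely technical rather than conceptual: ensuring the packing formula correctly handles numbers whose $n$-ary expansions have differing lengths (shorter numbers must be padded with leading zeros up to the appropriate block boundary), and confirming that $V_n$ restricted to codes still interacts correctly so that the interleaving is expressible. Because length comparison, digit extraction, and positional congruences modulo $m$ are all automaton-recognizable, none of these present a real difficulty, but they are exactly where care is needed to produce a bona fide $\BA_n$-formula. I expect the automaton-to-formula direction of Theorem~\ref{there} to do all the heavy lifting here, reducing the construction of $\mathrm{Pack}$ to the design of a small cyclic automaton.
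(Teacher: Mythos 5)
Your encoding is the right one---the paper uses exactly the digit-interleaving bijection $\varphi\colon\NN^m\to\NN$ with $\Digit_n(x_i,l)=\Digit_n(x,ml+i)$---but the pivotal step of your argument is false: the packing relation $\mathrm{Pack}(x_1,\ldots,x_m,y)$ is \emph{not} $\BA_n$-definable, and the paper proves this in the lemma immediately following the theorem. The automaton you describe would have to read the $n$-ary expansions of $y$ and the $x_j$ \emph{synchronously} (that is what an automaton on tuples means in Theorem~\ref{there}: at step $k$ it sees the $k$-th digit of every input simultaneously). But your correctness condition compares digit $k$ of $x_{j+1}$ with digit $km+j$ of $y$; the offset between the two positions grows linearly in $k$, so a finite-state device cannot buffer the information needed to check it. Concretely, for $n=m=2$: fixing $x_0=0$ and $x_1=2^k$ forces $y=2^{2k}$, so definability of $\mathrm{Pack}$ would give an automaton accepting exactly the pairs $(2^k,2^{2k})$, and a standard pumping argument (repeating a state $q_k=q_{m'}$ for some $k<m'$) makes it accept $(2^k,2^{k+m'})$ as well---a contradiction. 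So the formulas $D_{\iota'}$, $P_{\iota'}$, $f_{\iota'}$ you propose to build by ``decoding inside via $\mathrm{Pack}$'' do not exist.

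The fix, which is the paper's route, is to work at the level of the automata rather than the formulas: take each automaton of $\iota$ (which reads $m$-tuples of digits per step) and mechanically convert it into an automaton reading one digit per step, by adding intermediate states $q_{i,j_0},\ldots,q_{i,j_0,\ldots,j_{m-2}}$ that buffer a partial digit tuple until all $m$ digits of the current position have arrived, then firing the original transition. By Theorem~\ref{there} the resulting automata give $\BA_n$-formulas defining a one-dimensional interpretation $\iota'$. The isomorphism between the internal models is then the \emph{external} set-theoretic bijection $\varphi$; the theorem only asks that the models be isomorphic, not definably isomorphic, and indeed no definable isomorphism can exist. Your proposal conflates these two notions, and that conflation is exactly where it breaks.
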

		\begin{proof}
				We work in $\BA_n$. We will give a general construction that converts an automaton accepting a set $S\subseteq\NN^m$ into an automaton accepting a set $S'\subseteq\NN$. Specifically, the new automaton will accept numbers sequentially, instead of in $m$-tuples, by inputting all the zeroth digits, then all the first digits, and so on until all the natural numbers from the tuple are input.
				
				Assume that $\{q_0,\ldots,q_{N-1}\}$ are the states of the automaton $\Ac$, $q_0$ being the initial one.
				
				We start with the same set of states and add the following new states: for each $q_i$, we first append the new states $q_{i,0},\ldots,q_{i,n-1}$ and the rules $q_i,k\ra q_{i,k}$, $k=0,\ldots,n-1$. Next, we add states $q_{i,0,0},\ldots,q_{i,0,n-1},\ldots,q_{n-1,0},\ldots,q_{n-1,n-1}$ with the corresponding rules $q_{i,j_1},k\ra q_{i,j_1,k}$. The process of extending states continues up until we add the states of the type $q_{i,j_0,\ldots,j_{m-2}}$ for each $(m-1)$-tuple $j_0,\ldots,j_{m-2}$. All the newly added states are non-final; only $q_0$ is initial while only those original states which were final in $\Ac$ remain final.
				
				At last, for each transition rule $q_i,(j_0,\ldots,j_{m-2},j_{m-1})\ra q_l$ in $\Ac$ we add an arrow $q_{i,j_0,\ldots,j_{m-2}},j_{m-1}\ra q_l$. Clearly, the movement from $q_i$ to $q_l$ along the chain of (non-final) states $q_{i,j_0},\ldots,q_{i,j_0,\ldots,j_{m-2}}$ happens under the series of inputs $j_0,\ldots,j_{m-1}$ and corresponds exactly to the movement from $q_i$ to $q_l$ under the tuple $(j_0,\ldots,j_{m-1})$.
				
				As an example, we provide the transformation of the automaton checking the equality of two binary numbers.
				
				\begin{figure}[!h]
						\centering
						\begin{tikzpicture}[shorten >=1pt,node distance=3cm,on grid,auto]
								\node [state, initial, accepting] (q_0) {$q_0$};
								\node [state, right = of q_0] (q_1) {$q_1$};
								\path[->] (q_0) edge node [above] {$\binom{1}{0},\binom{0}{1}$} (q_1)
								edge [loop above] node {$\binom{0}{0},\binom{1}{1}$} ()
								(q_1) edge [loop above] node {$\binom{0}{0},\binom{1}{0},\binom{0}{1},\binom{1}{1}$} ();
							\end{tikzpicture}
						\caption{Automaton for $=(x,y)$ before transformation}
					\end{figure}
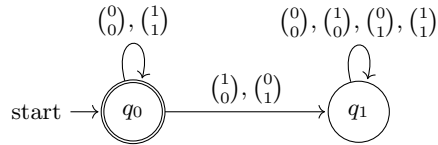
				
				\begin{figure}[!h]
						\centering
						\begin{tikzpicture}[shorten >=1pt,node distance=1.5cm,on grid,auto]
								\node [state, initial, accepting] (q_0) {$q_0$};
								\node [state, above right = of q_0] (q_00) {$q_{0,0}$};
								\node [state, below right = of q_0] (q_01) {$q_{0,1}$};
								\node [state, below right = of q_00] (q_1) {$q_1$};
								\node [state, above right = of q_1] (q_10) {$q_{1,0}$};
								\node [state, below right = of q_1] (q_11) {$q_{1,1}$};
								\path[->] (q_0) edge node [below] {$0$} (q_00)
								edge node [above] {$1$} (q_01)
								(q_00) edge node [above] {$1$} (q_1)
								edge [bend right] node [above] {$0$} (q_0)
								(q_01) edge node [above] {$0$} (q_1)
								edge [bend left] node [below] {$1$} (q_0)
								(q_1) edge node [below] {$0$} (q_10)
								edge node [above] {$1$} (q_11)
								(q_10) edge [bend right] node [above] {$0,1$} (q_1)
								(q_11) edge [bend left] node [below] {$0,1$} (q_1);
							\end{tikzpicture}
						\caption{Automaton for $=(x,y)$ after transformation}
					\end{figure}
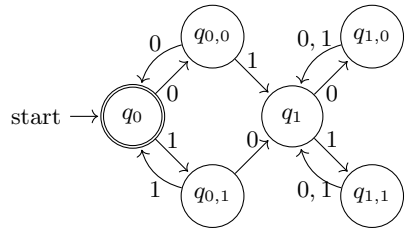
				
				Clearly, after applying this transformation procedure to each of the automata involved in the definition of the interpretation $\iota$, we obtain new automata (which are the same as $\BA_n$-definitions) of the domain and the necessary predicate and functional symbols. They define a new interpretation $\iota'$ of $A$ in $\BA_n$ obtained by the application of the bijective correspondence $\varphi\colon\NN^m\ra\NN$, where $$x=\varphi(x_0,\ldots,x_{m-1})\equivdef\Digit_n(x_i,l)=\Digit_n(x,m*l+i).$$ As $\varphi$ defines a bijection between $\NN^m$ and $\NN$, the new interpretation is isomorphic to $\iota$ by $\varphi$.
			\end{proof}
		
		We note that while the interpretation $\iota'$ thus constructed is isomorphic to $\iota$, it is not \emph{definably} isomorphic; that is, it is impossible to express the bijection $\varphi$ giving the isomorphism using a B\"uchi formula. Indeed, the following holds:
		
		\begin{lemma}
				Let $n=2$, $m=2$, $\varphi\colon\NN^2\ra\NN$ the bijection given by $x=\varphi(x_0,\ldots,x_1)\Lra\Digit_2(x_i,l)=\Digit_2(x,2l+i)$. Then $\varphi$ is not $\BA_2$-definable.
			\end{lemma}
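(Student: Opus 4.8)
The plan is to argue via the automata-theoretic characterization of \sref{Theorem}{there}: the map $\varphi$ is $\BA_2$-definable if and only if its graph $\{(x_0,x_1,x) : x=\varphi(x_0,x_1)\}$ is recognized by a finite automaton reading the three numbers synchronously in binary, least significant digit first (the digit order used in the proof of \sref{Theorem}{one-dimen}). I would assume for contradiction that such an automaton $\Ac$, with $N$ states, exists, and derive a lower bound on $N$ that grows without bound, which is the desired contradiction.

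The key structural observation is a mismatch of rates. After $\Ac$ has consumed the digits in positions $0,\ldots,2L-1$, it has read the digits of $x_0$ in all of these positions, yet the defining equalities $\Digit_2(x,2k)=\Digit_2(x_0,k)$ and $\Digit_2(x,2k+1)=\Digit_2(x_1,k)$ that are testable so far involve only the digits of $x_0$ in positions $0,\ldots,L-1$. Hence the $L$ digits of $x_0$ in positions $L,\ldots,2L-1$ have been seen but cannot yet be verified; they will be matched only against the digits of $x$ in the even positions $2L,2L+2,\ldots$, so the automaton must retain all of them. This backlog of $L$ unverified bits is what suggests a fooling-set argument of size $2^L$.

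Concretely, I would fix $x_1=0$ throughout, so that the second track is identically $0$ and the odd positions of $x$ are forced to $0$. For each $s\in\{0,1\}^L$ I would build a length-$2L$ prefix $u_s$ of a synchronous input in which the $x$-track and $x_1$-track are everywhere $0$, the $x_0$-track is $0$ in positions $0,\ldots,L-1$, and the $x_0$-track equals $s_0,\ldots,s_{L-1}$ in positions $L,\ldots,2L-1$. Each $u_s$ is consistent with the relation read so far, and the prefixes differ only on the $x_0$-track in positions $L,\ldots,2L-1$. I would then exhibit a single continuation $w$ — setting the remaining $x_0$- and $x_1$-digits to $0$ and the even $x$-digit in position $2L+2j$ to $s_j$ — for which $u_s w$ encodes the genuine instance $x=\varphi(x_0,0)$ and is therefore accepted, whereas for any $t\neq s$ the word $u_t w$ carries the same $x$ but a different $x_0$ and so violates the equation at the first index where $s$ and $t$ disagree, and must be rejected. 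If $u_s$ and $u_t$ reached the same state of $\Ac$, the common continuation $w$ would force the same verdict, a contradiction; hence the $2^L$ prefixes reach pairwise distinct states and $N\ge 2^L$. Choosing $L$ with $2^L>N$ contradicts finiteness of $\Ac$, so no such automaton exists and $\varphi$ is not $\BA_2$-definable.

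The main obstacle I anticipate is not the counting, which is routine, but the bookkeeping around the automaton's input conventions: confirming that fixing $x_1=0$ and padding the three numbers to a common length keeps each $u_s$ a genuine prefix of an accepted input, and that the factor-two offset between ``position read in $x_0$'' and ``position checked in $x$'' is tracked precisely, so that exactly the middle block of $x_0$-digits is the unverified backlog encoding $s$. Once this offset is pinned down, separating the prefixes by one shared continuation is immediate.
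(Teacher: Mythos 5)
Your proposal is correct: the fooling-set argument goes through, and your bookkeeping of the factor-two offset is accurate (with $x_1=0$ the odd positions of $x$ are forced to $0$, and the digits of $x_0$ in positions $L,\ldots,2L-1$ can only be checked against the even positions $2L,\ldots,4L-2$ of $x$, which is exactly the unverified backlog you describe). The paper exploits the same rate mismatch but packages it differently: it fixes $x_0=0$ instead of $x_1$, restricts to the one-parameter family $x_1=2^k$, $x=2^{2k}$, and runs a pigeonhole/pumping argument — two exponents $k<m$ must lead to the same state after the prefix $(0,0)^k(1,0)$, and then the accepting continuation for $m$ wrongly accepts the pair $(2^k,2^{k+m})$. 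Your version replaces the unary family by the $2^L$ prefixes indexed by $s\in\{0,1\}^L$ and distinguishes them pairwise by a tailored suffix $w_s$ (note that $w$ depends on $s$, which is all a Myhill--Nerode argument needs). What each buys: the paper's argument is shorter and needs only a single $1$-bit per track; yours yields a quantitative conclusion — any automaton correct on inputs of length $2L$ needs at least $2^L$ states — which makes explicit that the obstruction is an exponentially growing backlog rather than a mere counter. Both are legitimate proofs of the lemma; neither has a gap, provided one fixes (as you note) a padding convention under which $u_sw$ with trailing zero columns is accepted exactly when the encoded triple satisfies $x=\varphi(x_0,x_1)$, which is the standard convention behind \sref{Theorem}{there}.
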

		\begin{proof}
				Assume the contrary. Let there be an automaton checking $x=\varphi(x_0,x_1)$ for the inputs $(x,x_0,x_1)$ in binary. Let us fix the value $x_0=0$ and consider $x_1=2^k$. In this case, $x=2^{2k}$. Hence, there is an automaton $\Bc$ accepting pairs $(2^k,2^{2k})$ for all $k$. It should accept the following series of inputs: $(0,0)$ $k$ times, then $(1,0)$, then $(0,0)$ $k-1$ times, then $(0,1)$, moving to a final state after the last one (where it should remain on receiving further $(0,0)$ inputs only), while being in a non-accepting state for any other sequence of inputs.
				
				Let us denote the state of the automaton after receiving $(0,0)$ $k$ times and then $(1,0)$ as $q_k$. As the number of states is finite, we can see that for some $k<m$, $q_k=q_m$. Now consider the state of the automaton after receiving $(0,0)$ $k$ times, then $(1,0)$, then $(0,0)$ $m-1$ times, then $(0,1)$. After receiving $(1,0)$, the automaton is in $q_m$; this means that upon receiving $(0,0)$ $m-1$ times and then $(0,1)$ it must come to a final state. However, this also implies $B$ has just accepted the pair $(2^k, 2^{k+m})$, which does not belong to the accepting set of $B$, a contradiction.
			\end{proof}
		
		\section{Bi-Interpretability of $\BA_n$ and $\BA_l$}
		
		Now we prove that for all B\"uchi arithmetics, each of $\BA_n$ is bi-interpretable with $\BA_l$. This follows from the two statements below.
		
		\begin{theorem}\label{first}
				Each $\BA_{k^2}$ can be interpreted in $\BA_k$.
			\end{theorem}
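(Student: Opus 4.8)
The plan is to exploit the fact that interpreting $\BA_{k^2}$ in $\BA_k$ requires essentially no work on the universe. The underlying set of a model of either theory is $\NN$, and the symbols $=$ and $+$ carry the same meaning in both. I would therefore take the one-dimensional interpretation $\iota$ whose domain formula is $D_\iota(y)\equiv(y=y)$, whose equality and addition formulas are the literal $\BA_k$-formulas $x_1=x_2$ and $x_1+x_2=x_3$, and whose only nontrivial ingredient is a $\BA_k$-formula $\psi(x,y)$ defining the graph of $V_{k^2}$. Since the interpreting map is then the identity on $\NN$ and interpreted equality is genuine equality, the internal model is literally $(\NN;=,+,\psi)$; so it suffices to exhibit a $\BA_k$-formula $\psi$ with $\NN\models\psi(a,b)\Lra V_{k^2}(a)=b$, whereupon the identity map is an isomorphism of the internal model onto $(\NN;=,+,V_{k^2})$, which is a model of $\BA_{k^2}$ by definition.

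The arithmetic heart is the relation between $V_{k^2}$ and $V_k$. Writing $V_k(x)=k^f$, where $f$ is the largest exponent with $k^f\mid x$, the largest power of $k^2$ dividing $x$ has exponent $\lfloor f/2\rfloor$, so $V_{k^2}(x)=k^{2\lfloor f/2\rfloor}$. Concretely this gives the dichotomy $V_{k^2}(x)=V_k(x)$ when $f$ is even and $V_{k^2}(x)=V_k(x)/k$ when $f$ is odd. Letting $E(z)$ abbreviate a formula asserting that $z$ is a power of $k$ with even exponent, I would set
\[ \psi(x,y)\equiv\bigl(E(V_k(x))\wedge y=V_k(x)\bigr)\vee\bigl(\neg E(V_k(x))\wedge ky=V_k(x)\bigr), \]
where $V_k$ is the function symbol of $\BA_k$, so that $V_k(x)$ is a legitimate term, and $ky$ abbreviates the $\BA_k$-term $y+\cdots+y$ with $k$ summands. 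Thus $\psi$ is a genuine $\BA_k$-formula provided $E$ is $\BA_k$-definable. I would then check that $\psi$ defines a total function graph, paying attention to the boundary value $x=0$: there $V_k(0)=0$, so $\neg E(0)$ holds and $ky=0$ forces $y=0=V_{k^2}(0)$, as required.

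The one remaining point, and the step I would treat most carefully, is the $\BA_k$-definability of $E$. The even powers of $k$ are $1,k^2,k^4,\ldots$, whose base-$k$ representations consist of a single digit $1$ together with an even number of zeros; this set is recognized by a two-state automaton tracking the parity of the number of zeros flanking the unique nonzero digit and rejecting any second nonzero digit (and rejecting $0$ outright, so that $E(0)$ fails). By the converse direction of Theorem~\ref{there}, any such regular set is $\BA_k$-definable, which supplies $E$. I expect this definability of the even-exponent predicate to be the only genuine obstacle; once it is in hand, the rest reduces to the routine verification that $\psi$ computes $V_{k^2}$ via the exponent-parity dichotomy above, after which the identity isomorphism finishes the argument.
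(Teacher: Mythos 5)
Your proof is correct, but it reaches the key definability fact by a genuinely different route than the paper. Both arguments use the identity interpretation --- full domain, genuine equality and genuine addition --- so everything hinges on showing that the graph $\{(x,y): V_{k^2}(x)=y\}$ is $\BA_k$-definable. The paper does this by a generic automaton transformation: any automaton reading base-$k^2$ digits is converted into one reading base-$k$ digits by splitting each digit $k\cdot l+m$ into the two digits $l,m$ consumed in sequence through intermediate non-final states; applied to the base-$k^2$ automata for $=$, $+$ and the graph of $V_{k^2}$, this yields the interpretation in one stroke. You instead reduce $V_{k^2}$ arithmetically to the available symbol $V_k$ via the exponent-parity dichotomy ($V_{k^2}(x)=V_k(x)$ if the exponent of $V_k(x)$ is even, $V_{k^2}(x)=V_k(x)/k$ otherwise), and invoke Theorem~\ref{there} only for the single regular set of even powers of $k$. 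Your computation is right, including the boundary case $x=0$ and the check that $\psi$ is a total, single-valued graph (in the odd case $V_k(x)=k^f$ with $f\ge 1$, so division by $k$ stays in $\NN$), and the even-exponent predicate is indeed $k$-recognizable, so the appeal to Theorem~\ref{there} is legitimate. What the paper's version buys is generality: the digit-splitting transformation shows that \emph{every} $k^2$-recognizable relation is $k$-recognizable, a reusable fact that is iterated together with Theorem~\ref{second} to obtain Theorem~\ref{allinterpr}. Your argument is more elementary and self-contained but is tailored to the one function $V_{k^2}$ --- which suffices here, since $V_{k^2}$ is the only symbol of $\BA_{k^2}$ not already present in $\BA_k$.
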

		\begin{proof}
				The idea of the proof is to encode the $k^2$-ary digits as pairs of $k$-ary digits seeded into the automaton sequentially. We start from an automaton $\Ac$ performing some procedure on $k^2$-ary numbers. We apply the following transformation.
				
				We start with the original states $q_0,\ldots,q_s$ of $\Ac$. First, each state $q_i$ is supplemented by a series of non-final states called $q_{i,0},\ldots,q_{i,k-1}$ with transitions $q_i,a\ra q_{i,a}$. Then, for each transition $q_i,k*l+m\ra q_j$ of $\Ac$, we apply a new transition $q_{i,l},m\ra q_j$. Thus, each $k^2$-ary digit is entered into the new automaton as a sequence of two $k$-ary digits, $k*l+m$ encoded as $l$ followed by $m$, $l,m\in\{0,\ldots,k-1\}$.
				
				By applying the procedure described to the automata defining the predicates $=(x,y),+(x,y,z),V_{k^2}(x,y)$ in $\BA_{k^2}$, we interpret (with full domain) $\BA_{k^2}$ in $\BA_k$.
			\end{proof}
		
		\begin{theorem}\label{second}
				Each $\BA_{k}$ can be interpreted in $\BA_{k+1}$, $k\ge 2$.
			\end{theorem}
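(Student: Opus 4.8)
The plan is to interpret $\BA_k$ in $\BA_{k+1}$ by a length-preserving change of digit-representation. Because $k$ and $k+1$ are multiplicatively independent, no $(k+1)$-ary automaton recognizes the $k$-ary-recognizable sets verbatim (this is the same obstruction that made the bijection $\varphi$ of the previous lemma undefinable), so a full-domain interpretation cannot work and a genuine recoding is needed. I would encode a number $N$ with $k$-ary digit word $d_s\cdots d_0$, $d_i\in\{0,\ldots,k-1\}$, by the number $\tilde N=\sum_i d_i(k+1)^i$ whose $(k+1)$-ary digit word is the \emph{same} word $d_s\cdots d_0$. This map $\tilde{\,\cdot\,}$ is a bijection of $\NN$ onto the set $\mathbf D$ of $(k+1)$-ary numbers in which the digit $k$ never occurs; taking $\mathbf D$ as domain, the domain formula $D_\iota$ is provided by the automaton that accepts exactly the words over $\{0,\ldots,k-1\}$ and rejects upon reading $k$ (Theorem~\ref{there}), and equality is interpreted by genuine equality.

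The symbol $V_k$ requires no new automaton. The exponent of the largest power of $k$ dividing $N$ is the number of trailing zeros of the $k$-ary word of $N$, which is literally the number of trailing zeros of the $(k+1)$-ary word of $\tilde N$; and $\widetilde{k^j}=(k+1)^j$. Hence transporting the graph of $V_k$ through the encoding yields exactly the restriction of $V_{k+1}$ to $\mathbf D$, so one may take $(V_k)_\iota(x,y)$ to be the formula $D_\iota(x)\wedge y=V_{k+1}(x)$ (the value $(k+1)^j$ lies in $\mathbf D$ automatically, since $1,0<k$).

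Addition is the step I expect to carry the weight of the proof, and it is where the base mismatch bites: adding the decoded numbers is base-$k$ addition, whose carries are not the ambient base-$(k+1)$ carries. I would read the three encoded words by a single automaton over synchronized triples of $(k+1)$-ary digits $(d_x,d_y,d_z)$ — one digit per position, with no sequential feeding, since the encoding preserves digit positions — which rejects any triple containing $k$ and otherwise simulates base-$k$ addition. The decisive estimate is that the base-$k$ carry stays in $\{0,1\}$: from $d_x,d_y\le k-1$ and incoming carry $c\le 1$ we get $d_x+d_y+c\le 2k-1<2k$, so the two states $c\in\{0,1\}$ suffice, checking $d_z\equiv d_x+d_y+c\pmod k$ and updating $c\mapsto\lfloor(d_x+d_y+c)/k\rfloor$, with the final carry required to vanish. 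By Theorem~\ref{there} the recognized relation is $\BA_{k+1}$-definable and serves as $+_\iota$.

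It then remains to observe that $\tilde{\,\cdot\,}$ is an isomorphism from the internal model onto $(\NN;=,+,V_k)$: it is a bijection onto $\mathbf D$, and by the three steps above it carries interpreted equality, $V_k$ and addition to the genuine operations, so the internal model is the standard model of $\BA_k$. The only genuinely delicate points are the boundedness of the base-$k$ carry and the correct handling of an overflow digit and of zero-padding at the most significant position; the rest mirrors the automaton manipulations already used in Theorems~\ref{first} and~\ref{one-dimen}.
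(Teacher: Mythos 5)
Your proposal is correct and is essentially the paper's own argument: both use the digit-word-preserving encoding $\sum_i d_i(k+1)^i$, take as domain the $(k+1)$-ary numbers avoiding the digit $k$, and obtain the interpreting automata by running the base-$k$ automata on the (unchanged) digit words while rejecting any input containing the digit $k$ — the paper implements the rejection by a dead-end state, and you spell out explicitly why the base-$k$ addition automaton still works (carries bounded by $1$). The only cosmetic difference is that you define the image of $V_k$ as the restriction of $V_{k+1}$ to the domain rather than by transporting the $V_k$-automaton; these give the same relation.
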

		\begin{proof}
				We start with the automata expressing $=(x,y),+(x,y,z),V_{k}(x,y)$ in $\BA_k$. For each of them, we attach a new non-final ``dead-end" state $q_t$ such that it cannot be escaped: for all $s\in\{0,\ldots,k\}$ we define $q_t,s\ra q_t$. To each state $q_i$ of the former automaton we add a new transition $q_i,k\ra q_t$: upon encountering an occurrence of digit $k$, we transfer to $q_t$ and stay there. The resulting automata in $\BA_{k+1}$ define an interpretation of $\BA_k$ with the domain consisting of all those numbers that are written without digit $k$.
			\end{proof}
		
		Now, given $\BA_k$ and $\BA_l$, we first use \sref{Theorem}{second} a sufficient number of times to interpret $\BA_k$ in some $\BA_{l^{2^p}}$ where $p$ is such that $l^{2^p}>k$, then apply \sref{Theorem}{first} $p$ times to interpret $\BA_{l^{2^p}}$ in $\BA_l$. This establishes:
		
		\begin{theorem}\label{allinterpr}
				Each $\BA_k$ is interpretable in any of $\BA_l$, $k,l\ge 2$.
			\end{theorem}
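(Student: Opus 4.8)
The plan is to obtain the full interpretability result by composing the two directions already established, \sref{Theorem}{first} and \sref{Theorem}{second}, via the transitivity of interpretability. The first thing I would record is the standard fact that interpretations compose: if a theory $\mathbf{T}_1$ is interpretable in $\mathbf{T}_2$ through a translation $\iota_1$ and $\mathbf{T}_2$ is interpretable in $\mathbf{T}_3$ through $\iota_2$, then substituting $\iota_2$ into $\iota_1$ yields an interpretation of $\mathbf{T}_1$ in $\mathbf{T}_3$. In the automata-theoretic language used throughout this paper, that substitution is just the sequential composition of the defining automata, so no new machinery is required.

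Next, fix $k,l\ge 2$ and choose $p\in\NN$ with $l^{2^p}\ge k$; since $l\ge 2$ the quantity $l^{2^p}$ grows without bound in $p$, so such a $p$ exists. I would then climb upward from $\BA_k$ to $\BA_{l^{2^p}}$ using \sref{Theorem}{second}: one application interprets $\BA_k$ in $\BA_{k+1}$, a second interprets $\BA_{k+1}$ in $\BA_{k+2}$, and so on. Composing these $l^{2^p}-k$ interpretations produces an interpretation of $\BA_k$ in $\BA_{l^{2^p}}$, where I note that every intermediate base stays $\ge 2$, so the hypothesis of \sref{Theorem}{second} is met at each step.

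I would then descend from $\BA_{l^{2^p}}$ down to $\BA_l$ using \sref{Theorem}{first}. The key observation is that $l^{2^p}=(l^{2^{p-1}})^2$ is a perfect square, so \sref{Theorem}{first} applies and interprets $\BA_{l^{2^p}}$ in $\BA_{l^{2^{p-1}}}$; since $l^{2^{p-1}}=(l^{2^{p-2}})^2$ is again a perfect square, the theorem applies once more, and iterating $p$ times gives an interpretation of $\BA_{l^{2^p}}$ in $\BA_{l^{2^0}}=\BA_l$. Composing this descending chain with the ascending chain from the previous step yields an interpretation of $\BA_k$ in $\BA_l$, which is exactly the claim.

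Since both component theorems are already in hand, there is no substantial obstacle here; the proof is essentially an orchestration of the two lemmas. The only points demanding care are verifying that interpretability is genuinely transitive, so that the two chains may legitimately be composed, and checking that each base arising in the descent, namely $l^{2^j}$ for $0\le j\le p$, is a perfect square so that \sref{Theorem}{first} remains applicable at every rung. Both are immediate from the construction, so the main work has effectively been done in proving \sref{Theorem}{first} and \sref{Theorem}{second}.
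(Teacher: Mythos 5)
Your proposal is correct and follows essentially the same route as the paper: ascend from $\BA_k$ to $\BA_{l^{2^p}}$ by iterating \sref{Theorem}{second}, then descend to $\BA_l$ by applying \sref{Theorem}{first} $p$ times, composing the interpretations. The only difference is that you make the appeal to transitivity of interpretability and the perfect-square check at each rung explicit, which the paper leaves implicit.
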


	\section*{Acknowledgments}
	
	The author thanks Lev~Beklemishev for the suggestion of the topic.

\end{document}